\newtheorem{theorem}{Theorem}
\begin{document}

\title[Bisected vertex leveling of plane graphs]
{Bisected vertex leveling of plane graphs: braid index, arc index and delta diagrams}

\author[S. No]{Sungjong No}
\address{Institute of Mathematical Sciences, Ewha Womans University, Seoul 03760, Korea}
\email{sungjongno84@gmail.com}

\author[S. Oh]{Seungsang Oh}
\address{Department of Mathematics, Korea University, Seoul 02841, Korea}
\email{seungsang@korea.ac.kr}

\author[H. Yoo]{Hyungkee Yoo}
\address{Department of Mathematics, Korea University, Seoul 02841, Korea}
\email{lpyhk727@korea.ac.kr}

%\keywords{knot, link, braid index, arc index, delta diagram}
\thanks{Mathematics Subject Classification 2010: 57M25 57M27}
\thanks{The corresponding author(Seungsang Oh) was supported by the National Research Foundation of Korea(NRF) grant funded by the Korea government(MSIP) (No. NRF-2017R1A2B2007216).}
\thanks{The first author was supported by Basic Science Research Program through the National Research Foundation of Korea (NRF) funded by the Ministry of Education(2009-0093827).}

\maketitle

\begin{abstract}
In this paper, we introduce a bisected vertex leveling of a plane graph.
Using this planar embedding, we present elementary proofs of the well-known
upper bounds in terms of the  minimal crossing number on braid index $b(L)$ and arc index $\alpha(L)$ 
for any knot or non-split link $L$, which are
$b(L) \leq \frac{1}{2} c(L) + 1$ and $\alpha(L) \leq c(L) + 2$.
We also find a quadratic upper bound of the minimal crossing number of delta diagrams of $L$.
\end{abstract}

\section{Introduction} \label{sec:introduction}

In knot theory people have developed a variety of ways 
to represent knots and links in specific conformations. 
And a presentation of a knot or link accompanies a minimal quantity 
which is necessary for representing a knot or link type in a particular conformation. 
For example, the braid index and arc index are the minimal number of 
strings and arcs to make braid and arc presentations, respectively.

There are many references on braid index and arc index \cite{BP, Cr, CN, LNO, Ohy, R, Ya}, 
nevertheless it is not easy to determine the exact value in general.
Instead people are interested in their upper and lower bounds. 
Since knots and links are tabulated according to the minimal crossing number, 
usually the bounds are written in terms of the minimal crossing number.

Let $L$ be any knot or non-split link and $c(L)$ denote the minimal crossing number of $L$. 
A known bound for the braid index $b(L)$ of $L$ is
$$ b(L) \leq \frac{1}{2} c(L) + 1, $$
which was proved by Ohyama~\cite{Ohy}. 
And Bae and Park~\cite{BP} gave an upper bound for the arc index $\alpha(L)$,
$$ \alpha(L) \leq c(L) + 2. $$

In this paper we propose a new presentation of a plane graph 
which will be called a {\em bisected vertex leveling\/}, 
and prove that every connected plane graph without any loops or cut-vertices has a bisected vertex leveling. 
Note that a diagram of a knot or non-split link can be regarded as a 4-valent connected plane graph
by ignoring the under/over information of crossings. 
If the diagram is of minimal crossing number, 
then the corresponding graph has no loops or cut-vertices. 
So we can use the proposed presentation of graphs when dealing with 
the combinatorics of link diagrams with minimal crossings.

Using the bisected vertex leveling,
we give simple and elementary proofs of Ohyama's upper bound in Section~\ref{sec:braid}
and Bae-Park's upper bound in Section~\ref{sec:arc}.

In Section~\ref{sec:delta}, we apply our argument to delta diagrams. 
A diagram of a knot or link is called a {\em delta diagram\/} if it consists of only 3, 4 and 5-sided regions. 
Recently Jablan, Kauffman and Lopes~\cite{JKL} found 
a quartic upper bound for the minimal crossing number on delta diagrams of 
a nontrivial knot or non-split link by using the braid presentation. 
Applying the bisected vertex leveling we find a new quadratic upper bound.

\section{Bisected vertex leveling} \label{sec:leveling}

In this section, what is meant by a plane graph is a geometric realization of a planar graph
as a finite 1-dimensional CW-complex in $\mathbb{R}^2$.
Let $G$ be a connected plane graph with $n$ vertices.
We assume that $G$ does not have loops (edges with both endpoints at a single vertex).

A {\em vertex leveling\/} of $G$ is
its ambient isotopy lying between the two horizontal lines $y=0$ and $y=n$ in $\mathbb{R}^2$
such that the new graph $G$ satisfies the following two conditions (see Figure~\ref{fig:leveling});
\begin{itemize}
\item[(1)] All vertices of $G$ lie one by one on the lines $y=k-\frac{1}{2}$, $k=1,2, \dots, n$,
and we label them by $v_k$'s.
\item[(2)] Each edge of $G$ has no maxima and minima as critical points
of the height function given by the $y$-coordinate, except its endpoints (vertices).
\end{itemize}

\begin{figure}[h]
\includegraphics{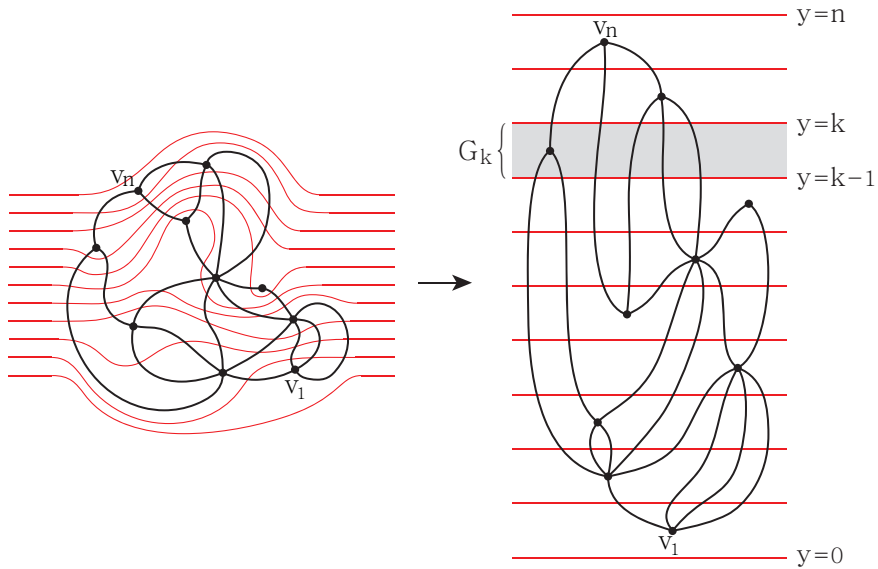}
\caption{Vertex leveling of $G$}
\label{fig:leveling}
\end{figure}

Let $G_k$ be a portion of $G$ lying between the two lines $y=k-1$ and $y=k$.
From condition (2), $G_k$ consists of three kinds of arcs;
$p$ arcs going down from $v_k$ to the line $y=k-1$,
$q$ arcs going up from $v_k$ to the line $y=k$,
and the remaining arcs going up from the line $y=k-1$ to $y=k$ as in Figure~\ref{fig:type}.
We say this has $T^q_p$ {\em type\/}.
Note that there is no arc whose two ends reach the same line.

\begin{figure}[h]
\includegraphics{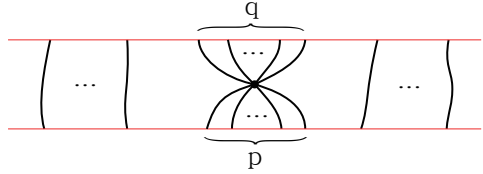}
\caption{$T^q_p$ type on each level}
\label{fig:type}
\end{figure}

It is noteworthy that the conditions (1) and (2) naturally imply the following conditions;
\begin{itemize}
\item[($3^\ast$)] Each portion $G_k$ of $G$ lying between $y=k-1$ and $y=k$
has the shape of $T^q_p$ type for some nonnegative integers $p$ and $q$.
\item[($4^\ast$)] $G_1$ has $T^q_0$ type with only $q$ arcs adjacent to $v_1$,
and similarly $G_n$ has $T^0_p$ type with only $p$ arcs adjacent to $v_n$.
\end{itemize}

A vertex leveling of $G$ is said to be {\em bisected\/} if it satisfies an additional condition as follows;
\begin{itemize}
\item[(5)] Each line $y=k$, $k=1,2, \dots, n-1$, cuts $G$ into two pieces, each of which is connected.
\end{itemize}
Then, it naturally satisfies the following condition (see Figure~\ref{fig:bvl});
\begin{itemize}
\item[($6^\ast$)] Each portion $G_k$, $k=2,3, \dots, n-1$ has neither $T^0_p$ type nor $T^q_0$ type.
\end{itemize}

The three additional conditions ($3^\ast$), ($4^\ast$) and ($6^\ast$) will be used in
Sections~\ref{sec:braid}, \ref{sec:arc} and \ref{sec:delta}.

\begin{figure}[h]
\includegraphics{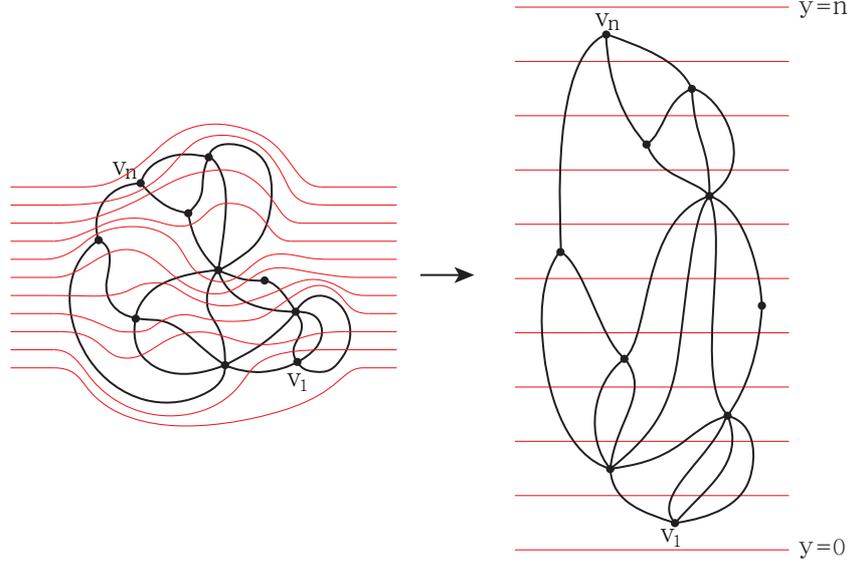}
\caption{Bisected vertex leveling of $G$}
\label{fig:bvl}
\end{figure}

We introduce the main result of this section.
A vertex $v$ is called a cut vertex of $G$
if $G \setminus \{v\}$ (deleting $v$ and its adjacent edges) has more connected components than $G$.

\begin{theorem}\label{thm:bvl}
Let $G$ be a connected plane graph without loops.
If it has no cut vertex, then $G$ has a bisected vertex leveling.
\end{theorem}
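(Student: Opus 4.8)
The plan is to reduce the statement to a combinatorial ordering of the vertices and then realize that ordering geometrically as an upward, level-preserving drawing. First I would dispose of the trivial cases $n=1$ and $n=2$ directly. For $n\ge 3$ the hypotheses ``connected'' and ``no cut vertex'' mean precisely that $G$ is $2$-connected, and this is the only place where the cut-vertex assumption enters. Fixing any edge $st$ lying on the outer face, the goal of the first half of the proof is to produce a linear order $v_1=s,v_2,\dots,v_n=t$ of the vertices that will serve as the bottom-to-top order of the levels.

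The engine for this is an $st$-numbering (equivalently, a bipolar orientation): an acyclic orientation of $G$ with $s$ as its unique source and $t$ as its unique sink. Such an orientation exists for every $2$-connected graph, and I would either invoke the classical theory of $st$-numberings or, to keep everything elementary, build one by induction along an open ear decomposition $G=C\cup P_1\cup\cdots\cup P_r$, orienting the initial cycle $C$ consistently from $s$ to $t$ and orienting each subsequent ear as a directed path running from the lower to the higher of its two already-oriented endpoints. Ordering the vertices by their number then yields $v_1,\dots,v_n$ with the two features I need: every internal vertex $v_k$ has at least one lower-indexed and at least one higher-indexed neighbour, and, as a consequence, each prefix $\{v_1,\dots,v_k\}$ and each suffix $\{v_{k+1},\dots,v_n\}$ induces a connected subgraph (chase the smaller, resp.\ larger, neighbour back to $s$, resp.\ up to $t$).

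For the geometric realization I would place $v_k$ on the line $y=k-\tfrac12$ and redraw each edge as a $y$-monotone arc running from its lower endpoint to its higher endpoint, so that conditions (1) and (2) hold by construction and each slab $G_k$ automatically has $T^q_p$ type. Because $s$ is a source and $t$ a sink, $G_1$ has no descending arc and $G_n$ has no ascending arc, giving ($4^\ast$); because every internal vertex has both a lower and a higher neighbour, there $p\ge 1$ and $q\ge 1$, giving ($6^\ast$). Condition (5) then follows from the prefix/suffix connectivity established above: after discarding the dangling half-edges, the part of the drawing below the line $y=k$ deformation-retracts onto the induced subgraph on $\{v_1,\dots,v_k\}$, and symmetrically for the part above, so each of the two pieces is connected.

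The heart of the argument, and the step I expect to be the main obstacle, is showing that this monotone redrawing can be carried out \emph{inside the plane} without introducing crossings, that is, as an ambient isotopy of the original plane graph $G$ rather than as a merely abstract upward drawing. The key structural fact to exploit is local: in a bipolar orientation of a plane graph the edges entering a vertex occur consecutively in the cyclic (rotation) order around it, and so do the edges leaving it. This ``two consecutive blocks'' property is exactly what lets one comb all descending arcs below a vertex and all ascending arcs above it while respecting the embedding, and, propagating this face by face from the outer face containing $st$ inward, assemble the monotone arcs level by level without collisions. Verifying carefully that the embedding is preserved throughout (and, in the intended application, the over/under data of a link diagram along with it) is the delicate part; the two situations that need attention are multiple edges between a single pair of vertices and the behaviour along the outer face, both of which are controlled by the same local consecutiveness property.
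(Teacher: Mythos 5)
Your proposal is correct in strategy but takes a genuinely different route from the paper, and it is worth seeing exactly where the two part ways. The paper never separates the combinatorics from the geometry: it chooses the vertex order greedily, one vertex at a time, taking as $v_{k+1}$ any vertex adjacent to $\{v_1,\dots,v_k\}$ whose removal leaves the part of $G$ above $y=k$ connected (such a vertex exists by a nested-component, finiteness argument that uses only the no-cut-vertex hypothesis), and immediately realizes that choice by an ambient isotopy of the part of $G$ above $y=k$; since every move is an ambient isotopy of the given plane graph, preservation of the embedding is automatic and never has to be checked. Your vertex order is the same object in disguise: an ordering in which every prefix and every suffix induces a connected subgraph is exactly an $st$-numbering, so the paper's greedy argument is in effect an elementary, self-contained existence proof of such a numbering. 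What you do differently is to compute that numbering up front (via ear decomposition), and you must then solve a planar realization problem: a plane graph equipped with a bipolar orientation, with $s$ and $t$ on the outer face, admits a level-respecting, $y$-monotone redrawing ambient isotopic to the original embedding. That statement is true --- it is the classical upward-planarity theorem for planar $st$-graphs (Kelly; Di Battista--Tamassia), and the ``consecutive blocks of in-edges and out-edges'' lemma you cite is indeed the right structural ingredient --- so your plan does go through, and it buys modularity plus contact with standard graph-drawing machinery. But be clear about where the weight sits: the realization step, which you flag as ``the delicate part'' and leave at sketch level, is not a routine verification; it carries essentially the entire geometric content of the theorem, and proving it face by face at full rigor would take at least as much work as the paper's interleaved induction. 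As written, your argument is complete only if you are willing to cite the planar $st$-graph literature for that step; if the intent is an elementary self-contained proof (which is the paper's stated goal), that part still has to be written out.
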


\begin{proof}
Let $G$ be any connected plane graph without loops which has $n$ vertices.
Assume that $G$ has no cut vertices.
There are at least two vertices meeting the unbounded region.
Choose two such vertices and name them $v_1$ and $v_n$.

We construct a bisected vertex leveling of $G$ by using induction on $k$.
To begin, we put the vertex $v_1$ on the line $y= \frac{1}{2}$ and move up the rest of the graph
by an ambient isotopy of $G$ so that
the portion $G_1$ lying under the line $y=1$ has $T^q_0$ type with only $q$ arcs adjacent to $v_1$.
This is possible since $v_1$ meets the unbounded region.
Note that $G \setminus \{v_1\}$ is still connected and lies above $y=1$.
Thus the line $y=1$ cuts $G$ into two connected pieces.

Suppose that we take an ambient isotopy of $G$
so that the new graph $G$ satisfies the three conditions (1), (2) and (5) of being
a bisected vertex leveling under the line $y=k$ (for $k \leq n-2$).
More precisely,
(1) some vertices $v_1, \dots, v_k$ of $G$ lie one by one on the lines $y=i-\frac{1}{2}$, $i=1,2, \dots, k$;
(2) each edge of $G$ has no maxima and minima under the line $y=k$, except its endpoints;
(5) each line $y=i$, $i=1,2, \dots, k$, cuts $G$ into exactly two connected pieces.

We will choose a vertex $v_{k+1}$ satisfying these three conditions.
Our candidates for $v_{k+1}$ are the vertices which are adjacent to some of $v_1, \dots, v_k$,
i.e., at least one of the adjacent edges of each candidate vertex must go below the line $y=k$.
Obviously, we exclude $v_n$ for candidates.
It is guaranteed that the set of candidates is nonempty;
for otherwise, either $G$ is not connected or $v_n$ must be a cut vertex of $G$.

Let $w_1$ be a vertex among the candidates.
Suppose that the part of $G \setminus \{w_1\}$ lying above $y=k$ is not connected
as in Figure~\ref{fig:choosing}.
Let $D_1$ be one component that does not contain $v_n$ and $D'_1$ the union of the other components.
Notice that both $D_1$ and $D'_1$ must contain some vertices adjacent to $w_1$
because of the condition (5) for the line $y=k$.
The component $D_1$ contains another candidate vertex $w_2$;
for otherwise, $w_1$ is a cut vertex of $G$.
Suppose further that the part of $G \setminus \{w_2\}$ lying above $y=k$ is not connected.
Then we repeat the same argument above.
Let $D_2$ be one component that does not contain $w_1$ and $D'_2$ the union of the other components.
Obviously, $D_2$ is wholly contained in $D_1$ and so $D'_2$ contains $w_1$ and $D'_1$ (and also $v_n$).

\begin{figure}[h]
\includegraphics{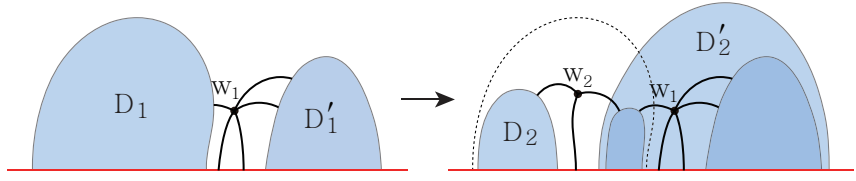}
\caption{Choosing $v_{k+1}$}
\label{fig:choosing}
\end{figure}

Choose another candidate vertex $w_3$ in $D_2$, and repeat the same argument
until we find a candidate vertex $w$ such that the part of $G \setminus \{w\}$ lying above $y=k$ is connected.
Since $G$ has finitely many vertices, we can always find one after a finite number of steps.
Denote this vertex $v_{k+1}$.

Now, we ambient isotope the part of $G$ lying above the line $y=k$ 
while holding it fixed below the line as in Figure~\ref{fig:making}.
After this isotopy, $v_{k+1}$ lies on the line $y=k+ \frac{1}{2}$ and
the portion of $G$ lying between the two lines $y=k$ and $y=k+1$, say $G_{k+1}$,
has $T^q_p$ type for some positive integers $p$ and $q$.
It is crucial that two integers are positive since
at least one of the edges adjacent to $v_{k+1}$ goes below $y=k$
and the part of $G$ lying above $y=k$ is connected.

Now the new graph $G$ satisfies the three conditions (1), (2) and (5) of being
a bisected vertex leveling under the line $y=k+1$.

At the last step of the induction, the vertex $v_{n-1}$ will be suitably chosen.
Note that the vertex $v_n$ defined at the beginning can be easily moved
so that the part of $G$ lying above the line $y=n-1$ has $T^0_p$ type for some positive integer $p$.
\end{proof}

\begin{figure}[h]
\includegraphics{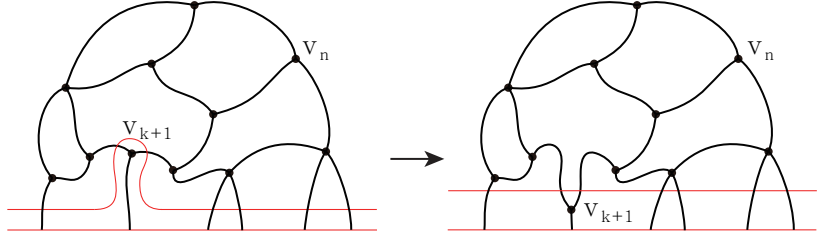}
\caption{Making $G_{k+1}$}
\label{fig:making}
\end{figure}

\section{Braid index} \label{sec:braid}

Braids were first considered as a tool of knot theory by Alexander~\cite{Al},
who proved that every knot or link $L$ can be represented as a closed braid with finitely many strings.
The {\em braid index\/} of $L$, denoted $b(L)$, is defined as the minimal number of strings
needed for $L$ to be represented as a closed braid.
Yamada~\cite{Ya} found an upper bound for the braid index by showing that $b(L) \leq S(D)$,
where $S(D)$ is the number of Seifert circles of a diagram $D$ of $L$.
Improving Yamada's result, Murasugi and Przytycki~\cite{MP} showed that $b(L) \leq S(D) - \text{ind}(D)$
by introducing a new concept $\text{ind}(D)$, called an index of $D$.

In~\cite{Ohy}, Ohyama found the relation between the crossing number $c(D)$ of 
a reduced diagram $D$ and $S(D)$
which is $c(D) \geq 2 \{S(D) - \text{ind}(D) - 1\}$.
Applying the result of Murasugi and Przytycki, Ohyama proved the following theorem.

\begin{theorem}[Ohyama] \label{thm:braid}
Let $L$ be any knot or non-split link and $c(L)$ the minimal crossing number of $L$.
Then we have an upper bound of the braid index of $L$
$$ b(L) \leq \frac{1}{2} c(L) + 1. $$
\end{theorem}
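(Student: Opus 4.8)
The plan is to realize a minimal-crossing diagram of $L$ as a bisected vertex leveling and read off a closed braid from it. First I would take a diagram $D$ of $L$ with $c(L)$ crossings and forget the over/under data to obtain a $4$-valent connected plane graph $G$ on $n=c(L)$ vertices. Since $D$ is minimal, a loop would be a removable Reidemeister~I kink and a cut vertex would be a nugatory crossing; moreover $L$ is non-split, so $G$ is connected with no loops and no cut vertices. Hence Theorem~\ref{thm:bvl} applies and endows $G$ with a bisected vertex leveling between $y=0$ and $y=n$.

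Next I would reinstate the crossing information and read the $y$-coordinate as a height (Morse) function. Condition~$(3^\ast)$ together with $4$-valence forces $p+q=4$ on each level, and condition~$(6^\ast)$ forces $1\le p,q\le 3$ on the interior levels, so every interior vertex is of type $T^1_3$, $T^2_2$ or $T^3_1$, while $v_1$ has type $T^4_0$ and $v_n$ has type $T^0_4$. Following the two strands through each crossing, a $T^2_2$ vertex is an honest transverse crossing, a $T^1_3$ vertex carries one local minimum (a cup), a $T^3_1$ vertex carries one local maximum (a cap), and $v_1$ (resp. $v_n$) contributes two cups (resp. two caps). Writing $e,\ell,m$ for the numbers of interior $T^2_2$, $T^1_3$, $T^3_1$ vertices, the diagram has $\ell+2$ minima and $m+2$ maxima; since the underlying $1$-manifold is closed these counts are equal, so $\ell=m$, and counting interior vertices gives $2m+e=n-2$.

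The substantive step is to turn this height presentation into a closed braid whose number of strands equals the number $m+2$ of maxima. I would do this by pushing every cap upward and every cup downward and spiralling the descending arcs around a single braid axis, so that each of the $m+2$ maxima becomes the top of one braid strand, the $T^2_2$ vertices become braid generators, and the closure reconnects the strands through the minima. Here the bisected condition~(5) is essential and is where I expect the main difficulty: a bare Morse presentation with few maxima need not be a closed braid at all (a $2$-bridge presentation has only two maxima yet arbitrarily large braid index), so I must use that each line $y=k$ separates $G$ into two connected pieces to guarantee that the caps and cups are nested coherently and that the resulting arcs wind monotonically about the axis, i.e. that the output is a genuine braid on $m+2$ strands rather than a mere bridge presentation. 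Granting this, $b(L)\le m+2$.

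Finally the arithmetic closes the argument. From $2m+e=n-2$ and $e\ge 0$ I obtain $m\le \tfrac12(n-2)$, hence
$$ b(L)\ \le\ m+2\ \le\ \tfrac12(n-2)+2\ =\ \tfrac12 n+1\ =\ \tfrac12 c(L)+1, $$
with equality forced exactly when $e=0$, that is, when every interior crossing is a cap or a cup. Beyond the braiding step, the only points needing care are the degenerate $T^2_2$ configuration in which both descending edges belong to the same strand (a cap crossing a cup), which I would eliminate by a local quarter-turn so that every $T^2_2$ is counted as a pass-through, and the bookkeeping of orientations, which does not affect the strand count.
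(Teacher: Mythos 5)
Your reduction to the bisected vertex leveling, the typing of the interior vertices as $T^3_1$, $T^2_2$, $T^1_3$, and the counts $\ell=m$, $2m+e=n-2$ are all fine (up to the harmless swap of which of $T^1_3$, $T^3_1$ is a cap versus a cup in the paper's $T^q_p$ notation). But the step you yourself flag as the main difficulty --- that the bisected condition forces the presentation to close up into a braid on $m+2$ strands, i.e.\ that $b(L)\le(\text{number of maxima})$ --- is not just unproved, it is false, so the argument cannot be completed along these lines. Here is a concrete counterexample. Place a crossing $v_1$ at the bottom with all four edges ascending (type $T^4_0$), labelled $a_1,a_2,a_3,a_4$ from left to right, the strands pairing $(a_1,a_3)$ and $(a_2,a_4)$; let $a_1$ and $a_2$ cross transversally at $v_2$ (type $T^2_2$) with ascending outputs $b_1,b_2$; let $b_2$ and $a_3$ cross transversally at $v_3$ (type $T^2_2$) with ascending outputs $c_1,c_2$; finally let $b_1,c_1,c_2,a_4$ all run into a top crossing $v_4$ (type $T^0_4$). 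One checks directly that each line $y=k$ cuts this graph into two connected pieces, so this is a genuine bisected vertex leveling satisfying every condition used in the proof, and that the underlying shadow is reduced with two bigons and four trigons, i.e.\ the standard projection of the figure-eight knot; giving it the alternating over/under assignment yields a minimal diagram of $4_1$. This leveling has $e=2$, $m=\ell=0$, hence exactly two maxima and two minima, yet $b(4_1)=3>2$. So condition (5) is perfectly compatible with $2$-bridge-like levelings, and the number of maxima simply does not control the braid index --- exactly the obstruction you anticipated, which the bisected hypothesis does not remove.

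The paper's proof bounds a different quantity, and this is the idea your proposal is missing. After the leveling, each portion $G_k$ is replaced by horizontal and vertical line segments, producing a grid-like diagram with exactly $n+2$ horizontal segments, each crossing at most one vertical segment. One then \emph{orients} the diagram: every horizontal segment is either left directed or right directed, and by pigeonhole one may assume at most $\frac{n+2}{2}$ are left directed. Each left directed horizontal segment is then swept around the outside (replaced by two external rays, keeping the over/under data), and the result is a closed braid whose number of strands is the number of left directed segments, giving $b(L)\le\frac{n+2}{2}$. The crucial point is that the braid strand count is this orientation-dependent quantity, bounded by half of $n+2$ no matter what the vertex types are; it is not the orientation-independent count of maxima, which (as the example above shows) can be as small as $2$ while the braid index is larger. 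If you want to salvage your outline, you would have to replace ``number of maxima'' by ``number of coherently directed horizontal segments'' and run the pigeonhole argument, which is in essence the paper's proof.
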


Cheng and Jin~\cite{CJ} found an infinite family of links, each of which satisfies the equality of the theorem.

In this section, a simple proof of Theorem~\ref{thm:braid} is given by means of
a bisected vertex leveling of a 4-valent plane graph which is a diagram of $L$ with minimal crossings.

\begin{proof}
Let $L$ be any knot or non-split link.
Given a diagram $D$ of $L$ with minimal $c(L)$ crossings,
one can ignore which strand is the overstrand at each crossing and
think of it as a connected 4-valent plane graph with $n=c(L)$ vertices, denoted $G$.
We assume that $G$ does not have loops and cut vertices from the fact that
the corresponding diagram has minimal crossings and so has no nugatory crossing.

By Theorem~\ref{thm:bvl}, there exists a bisected vertex leveling of $G$
so that each $G_k$, $k=2,3, \dots, n-1$ has a type among $T^3_1$, $T^2_2$ and $T^1_3$,
while $G_1$ has $T^4_0$ type and $G_n$ has $T^0_4$ type.
See the Figure~\ref{fig:braidbvl} for an example.

\begin{figure}[h]
\includegraphics{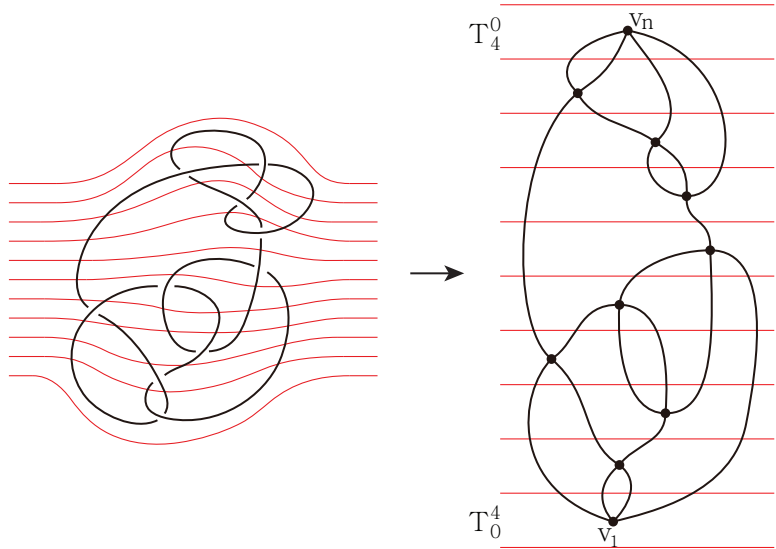}
\caption{Bisected vertex leveling of a knot}
\label{fig:braidbvl}
\end{figure}

First, replace each portion $G_k$ with a combination of horizontal and vertical line segments
as shown in Figure~\ref{fig:braidtrans}.
Then, restore the diagram $D$ from $G$ by giving the under/over information on each vertex of $G$
according to the original crossing information of $D$.
See the left picture in Figure~\ref{fig:braidrestor}.

\begin{figure}[h]
\includegraphics{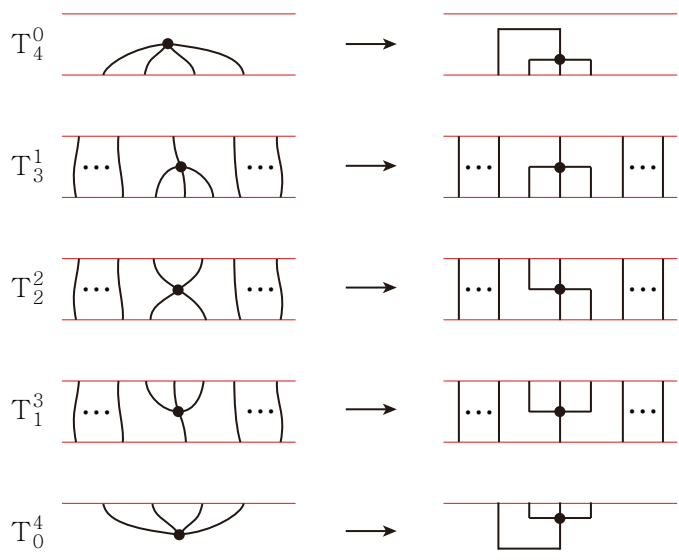}
\caption{Transforming $G_k$}
\label{fig:braidtrans}
\end{figure}

\begin{figure}[h]
\includegraphics{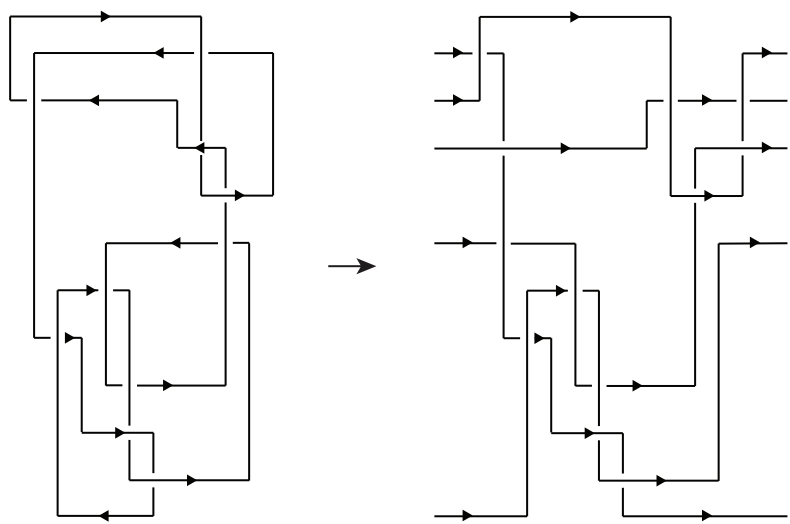}
\caption{Restoring $D$ from $G$ and making a braid}
\label{fig:braidrestor}
\end{figure}

It is noteworthy that there are exactly $n+2$ horizontal line segments and
each of them, except the top and the bottom horizontal line segments,
crosses exactly one vertical line segment.

Choose one direction on $D$ to make a directed diagram.
If $L$ is a link with more than one component, then choose a direction for each component.
Each horizontal line segment has two possibilities, say left directed or right directed.
Without loss of generality,
we assume that the number of left directed horizontal line segments are
less than or equal to the number of right directed horizontal line segments.

Replace each left directed horizontal line segment
with two external horizontal rays starting at its two endpoints as follows;
if it runs through under (resp., over) a vertical line segment,
then the two external rays must run through under (resp., over) other vertical line segments
as the right picture in Figure~\ref{fig:braidrestor}.
This is always possible since each horizontal line segment crosses at most one vertical line segment.
This picture can be viewed as a braid which represents $L$ as in Figure~\ref{fig:braidform}.

\begin{figure}[h]
\includegraphics{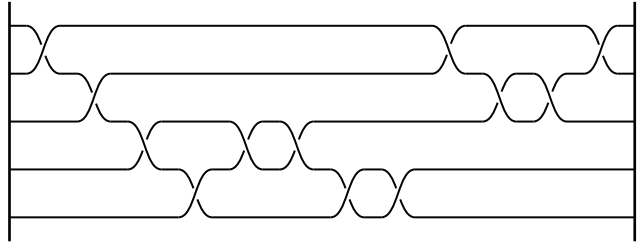}
\caption{Braid form of $L$}
\label{fig:braidform}
\end{figure}

Since there are at most $\frac{n+2}{2}$ left directed horizontal line segments,
the braid index of $L$ is at most $\frac{n+2}{2}$.
\end{proof}

\section{Arc index} \label{sec:arc}

The three-dimensional space $\mathbb{R}^3$ has an open-book decomposition 
which has open half-planes as pages and the standard $z$-axis as the binding axis.
In an {\em arc presentation\/} of a knot or link $L$, 
it is embedded in an open-book with finitely many pages 
so that it meets each page in exactly one simple arc with two different end-points on the binding axis.
Here the points of $L$ on the binding axis are labeled  by $1, 2, \dots$ in order,
which are called the {\em binding indices\/}.

Cromwell~\cite{Cr} introduced the term {\em arc index\/} $\alpha(L)$ to denote
the minimal number of arcs to make an arc presentation of $L$.
Later, Cromwell and Nutt~\cite{CN} conjectured the following upper bound on the arc index
in terms of the minimal crossing number which was proved by Bae and Park~\cite{BP}.

\begin{theorem}[Bae-Park] \label{thm:arc}
Let $L$ be any knot or non-split link and $c(L)$ the minimal crossing number of $L$.
Then we have an upper bound of the arc index of $L$
$$ \alpha(L) \leq c(L) + 2. $$
\end{theorem}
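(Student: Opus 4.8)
The plan is to recycle the rectangular picture produced in the proof of Theorem~\ref{thm:braid} and reinterpret it as an arc presentation, this time counting arcs instead of braid strings. As before, I would view a minimal diagram $D$ of $L$ as a connected $4$-valent plane graph $G$ with $n=c(L)$ vertices and no loops or cut vertices, and invoke Theorem~\ref{thm:bvl} to obtain a bisected vertex leveling whose interior portions $G_k$ ($2\le k\le n-1$) are of type $T^3_1$, $T^2_2$ or $T^1_3$, with $G_1$ of type $T^4_0$ and $G_n$ of type $T^0_4$. Replacing each $G_k$ by horizontal and vertical line segments exactly as in Figure~\ref{fig:braidtrans} and restoring the over/under data of $D$, I obtain a diagram of $L$ made of $n+2$ horizontal segments lying at the distinct heights $y=0,\tfrac12,\tfrac32,\dots,n-\tfrac12,n$ (one through each vertex, plus the top and bottom caps), together with vertical segments, in which every interior horizontal segment crosses exactly one vertical segment.

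Next I would use the dictionary between arc presentations and rectangular diagrams. After collapsing all the vertical segments onto a single vertical line, declared to be the binding axis, each horizontal segment becomes one simple arc lying in its own page of the open book, with both endpoints on the binding axis at the prescribed heights; the heights $0,\tfrac12,\dots,n$ then serve as the binding indices. Because the diagram contains $n+2$ horizontal segments and each yields exactly one arc, such a presentation uses $n+2$ arcs, which would give the desired bound
$$\alpha(L)\le n+2=c(L)+2.$$

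The crux is the crossing convention. In an open book the relative over/under of two arcs is forced by the order of their pages, so an arc presentation obtained by collapsing verticals has the vertical segment passing over the horizontal one at every crossing, whereas $D$ imposes arbitrary over/under at the vertices. The hard part will be to arrange that at each vertex it is the over-strand of $D$ that is routed as the vertical segment of the gadget, so that this convention is automatically met. I expect this to be possible precisely because each interior horizontal segment meets only a single vertical segment: the crossing at $v_k$ is isolated at its own level, so one may independently choose which of the two strands through $v_k$ plays the horizontal role, taking it to be the under-strand. I would then check that this local choice is compatible with each of the types $T^3_1$, $T^2_2$ and $T^1_3$ and with the planarity of the surrounding arcs, treating the two caps $G_1$ and $G_n$ separately since they contribute the top and bottom horizontal segments, which cross no vertical segment. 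Once these choices are made coherently, the collapsed picture is a genuine arc presentation of $L$ with $n+2$ arcs, completing the proof.
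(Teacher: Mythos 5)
Your setup coincides with the paper's: both arguments reuse the rectangular diagram $D$ built in the proof of Theorem~\ref{thm:braid}, with $c(L)+2$ horizontal and $c(L)+2$ vertical segments, each interior horizontal segment crossing exactly one vertical segment, and both aim to read this grid as an arc presentation. You also correctly isolate the crux: a grid collapsed into an open book forces one uniform crossing convention, while $D$ carries arbitrary over/under data. But your proposed resolution of the crux fails. At a vertex of type $T^3_1$ (and symmetrically $T^1_3$), the two strands through the vertex are not interchangeable: since the strands of a crossing pair up opposite edges in the cyclic order, one strand consists of the two outer up-going edges and has a local minimum at the vertex, while the other (the down edge together with the middle up edge) is monotone. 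In any rectangularization that keeps the segment count, the extremal strand must be the horizontal segment (both of its ends bend upward) and the monotone strand must be the vertical one; forcing the extremal strand into the vertical role requires a U-turn below the level, i.e., at least one extra horizontal and one extra vertical segment, and every extra horizontal segment adds an arc, destroying the bound $c(L)+2$. Since the over/under data at such a vertex is dictated by $L$ and is arbitrary, you cannot in general arrange that the forced-horizontal strand is the under-strand; the claimed freedom ``to choose which of the two strands plays the horizontal role'' does not exist precisely at the $T^3_1$ and $T^1_3$ vertices, which are generic in the leveling. (A smaller inconsistency: if the heights $0,\frac12,\dots,n$ are the binding indices, then a horizontal segment, whose two endpoints share a single height, cannot serve as an arc joining two distinct binding points; in that transposed dictionary it is the vertical segments that would become arcs.)

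The paper resolves the crux without re-routing anything. It collapses the vertical segments to binding points on a horizontal axis, so the horizontal segments become the arcs, and then uses the at-most-one-crossing property for the opposite purpose from yours: each horizontal segment is either \emph{over} or \emph{under} the unique vertical segment it meets (or meets none), so over/under is a well-defined label per arc, not per crossing. Arcs labelled over are placed on one side of the binding axis, arcs labelled under on the other side, ordered among themselves by their heights, and finally the arcs on one side are rotated $180^{\circ}$ about the axis into their own pages. In other words, the mixed crossing information of $D$ is realized by the choice and ordering of pages in the open book, while the diagram itself is left untouched. That is the idea missing from your proposal, and it is exactly what the property ``each horizontal segment crosses at most one vertical segment'' is for.
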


Using the bisected vertex leveling argument, we present another elementary proof of this theorem.

\begin{proof}
Let $L$ be any knot or non-split link.
By following the argument of the first four paragraphs in the proof of Theorem~\ref{thm:braid},
we use the diagram $D$ depicted as the left picture in Figure~\ref{fig:braidrestor}.
Note that $D$ consists of $c(L) + 2$ horizontal line segments and $c(L) + 2$ vertical line segments,
and each horizontal line segment crosses at most one vertical line segment.
In this case, we do not need to give a direction to the diagram.

We assign $1, 2, \dots, c(L)+2$ from left to right for the vertical line segments.
We also denote the horizontal line segments by $b_k$, $k=1,2, \dots, c(L)+2$, from bottom to top.
The diagram $D$ can always be converted to an arc presentation of $L$ as follows 
(see Figure~\ref{fig:arcpresent}).
First, shrink the vertical line segment labeled by each number $k$ to a point
which is indeed the $k$-th binding index.

Next, transform each horizontal line segment $b_k$ to an arc connecting the two binding indices
which correspond to the two vertical line segments adjacent to $b_k$.
We arrange these $c(L)+2$ arcs according to the following two rules;
\begin{itemize}
\item[(a)] The arcs related to the horizontal line segments running over some vertical line segment
lie above the binding axis, and the other arcs lie below the binding axis.
\item[(b)] If $i < j$, then the arc $b_j$ lies behind the arc $b_i$ when they cross.
\end{itemize}

\begin{figure}[h]
\includegraphics{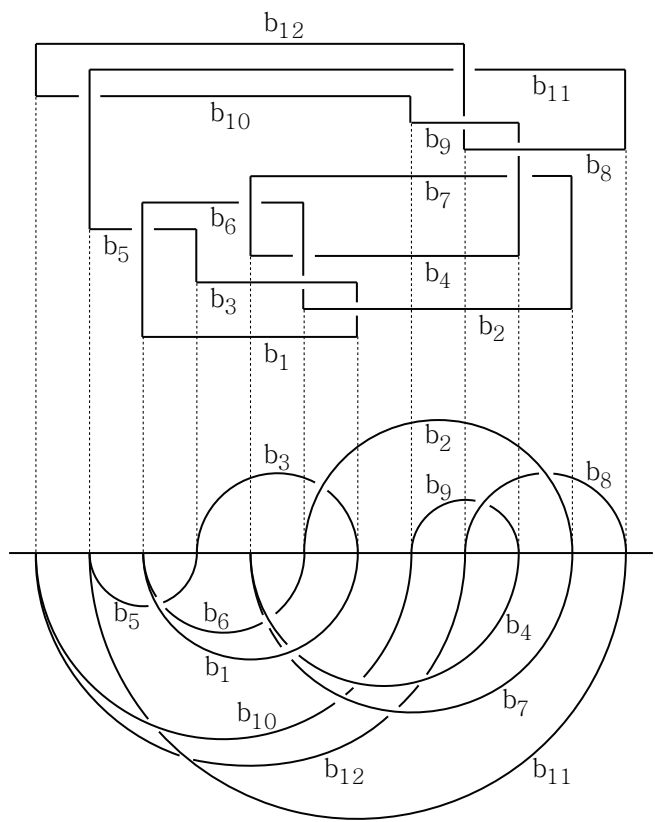}
\caption{An arc presentation of $L$}
\label{fig:arcpresent}
\end{figure}

Indeed, if we pull $b_k$'s which run over some vertical line segment forward and push the rest $b_k$'s backwards, 
then we get the bottom picture in Figure~\ref{fig:arcpresent} when we look at it from the bottom.

Now rotate each arc lying above the binding axis $180^{\circ}$ along the binding axis 
so that it does not pass through other arcs.
Finally we get an arc presentation of $L$ with $c(L)+2$ arcs.
\end{proof}

\section{Delta diagrams} \label{sec:delta}

In this section we investigate the graphical structure of diagrams of links
which have a restriction to the number of edges of each region.
It is easy to show that every link has a diagram
which does not possess regions with 1 edge by simply untwisting the nugatory crossings.

There are several important results published recently.
Eliahou, Harary and Kauffman~\cite{EHK} proved that
every link has a diagram which does not possess regions with 2 sides (lunes).
This diagram is called a {\em lune-free diagram\/}.
For example, see the left picture in Figure~\ref{fig:lune-delta}.
Adams, Shinjo and Tanaka~\cite{AST} introduced an increasing sequence of integers
which is said to be {\em universal\/} if every link has a diagram such that
the number of sides of each region (including the unbounded one) comes from the given sequence.
They provided several universal infinite sequences such as
$(3, 5, 7, \dots ), (2, n, n+1, n+2, \dots )$ for each $n \geq 3$ and $(3, n, n+1, n+2, \dots )$ for each $n \geq 4$,
and universal finite sequences such as $(2, 4, 5)$ and $(3, 4, n)$ for each $n \geq 5$.

\begin{figure}[h]
\includegraphics{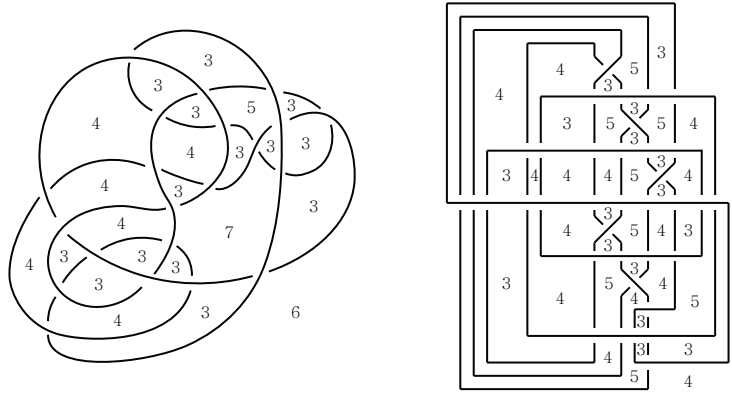}
\caption{A lune-free diagram and a delta diagram}
\label{fig:lune-delta}
\end{figure}

Recently Jablan, Kauffman and Lopes~\cite{JKL} independently showed that
any link can be represented by a diagram
whose regions possess 3, 4 or 5 sides, called a {\em delta diagram\/}.
In that paper, they start with a braid closure representation of the link
and deform it in order to obtain a desired delta diagram as the right picture in Figure~\ref{fig:lune-delta}.
They also proved that every link has a delta diagram with at most
$c(L)^4-4c(L)^3+12c(L)^2-16c(L)+15$ crossings in~\cite[Theorem 3.2]{JKL}.

In this section we present a quadratic upper bound on the number of crossings
produced by the transformation into a delta diagram.

\begin{theorem} \label{thm:delta}
Let $L$ be a nontrivial knot or non-split link (except the Hopf link)
and $c(L)$ the minimal crossing number of $L$.
Then $L$ has a delta diagram with at most $\frac{1}{2}c(L)^2 + 4c(L) - 5$ crossings.
\end{theorem}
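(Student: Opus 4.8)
The plan is to recycle the braid form of $L$ constructed in the proof of Theorem~\ref{thm:braid}. That argument turns a minimal diagram into the closure of a braid $\beta$ on $b \le \frac{1}{2}c(L)+1$ strands, and my first step would be to count exactly how many crossings this closed braid carries. There are the $c(L)$ crossings coming from the vertices $v_k$ of the bisected vertex leveling, together with the extra crossings created when each left-directed horizontal segment is pushed off to a pair of external rays. Since each such ray sweeps past at most $b-1$ vertical strands and there are at most $\frac{c(L)+2}{2}$ left-directed segments, the number of crossings of the closed braid is already quadratic in $c(L)$, and pinning down its precise value (as the product of a quantity of size $b-1 \le \frac{1}{2}c(L)$ with one of size $\approx c(L)$) is what will produce the leading coefficient $\frac12$.

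Next I would analyze the regions of this closed-braid diagram and isolate those with six or more sides. A region lying in the channel between two adjacent strands and bounded above and below by consecutive crossings of that channel is a quadrilateral unless crossings in the two neighbouring channels meet its side walls; one checks that such a region has $2+a+b$ sides, where $a$ and $b$ count those neighbouring crossings. Hence the only candidates for oversized regions are these stretched channel regions, together with the two regions bordering the braid closure and the unbounded region. For each region with more than five sides I would subdivide it by routing a strand across it in a zigzag, cutting it into triangles and quadrilaterals. The delicate point here is that a naive Reidemeister~II insertion produces a bigon, which is forbidden in a delta diagram, so the insertions must be chosen to create only $3$-, $4$-, or $5$-sided regions while still being an ambient isotopy of $L$.

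Finally I would total the crossings. Summed over all channels, $\sum(a+b)$ is controlled by the crossing count and strand number of $\beta$, so the number of added crossings stays quadratic; combining this with the count of Step~1 and treating the closure and unbounded regions by hand should yield exactly $\frac{1}{2}c(L)^2 + 4c(L) - 5$. I would then verify the bound directly for the few small values of $c(L)$ and explain why the unknot and the Hopf link must be excluded, their minimal diagrams being too simple for the subdivision to succeed within the stated budget. The hardest part will be precisely this last bookkeeping: guaranteeing simultaneously that the inserted zigzags create no new bigons or oversized regions, that the closure and unbounded regions are repaired at the same time, and that the lower-order constants $4$ and $-5$ come out correctly rather than merely the leading term $\frac12 c(L)^2$.
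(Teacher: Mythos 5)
Your plan diverges fundamentally from the paper's proof, and it has a gap that is not just bookkeeping but fatal to the stated constant. The paper never returns to the braid form at all: it takes the bisected vertex leveling of the $4$-valent graph $G$ directly, restores the under/over information to get $D$, lays a \emph{spiral} $S$ made of $n-1$ horizontal line segments (parts of the lines $y=i$) entirely \emph{under} $D$, and joins $S$ to $D$ at one intersection point. Because the spiral threads through every level, the union $S\cup D$ automatically has only $3$-, $4$- and $5$-sided regions --- no region ever needs to be located and repaired. The count is then immediate: $n$ crossings from $D$, $n-2$ self-crossings of $S$, and at most $\tfrac12 n^2+2n-3$ crossings between $D$ and $S$, giving $\tfrac12 c(L)^2+4c(L)-5$. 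Your route, by contrast, is essentially the Jablan--Kauffman--Lopes strategy (braid closure plus region-by-region subdivision), which is precisely the method the paper set out to improve and which yielded only a quartic bound in~\cite{JKL}.

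The concrete obstruction to your version is arithmetic. The closed braid built in the proof of Theorem~\ref{thm:braid} can itself have about $\tfrac12 c(L)^2$ crossings in the worst case: each of the up to $\tfrac{c(L)+2}{2}$ left-directed horizontal segments is rerouted by external rays that may pass on the order of $c(L)$ vertical segments. So the braid alone can exhaust essentially the whole budget $\tfrac12 c(L)^2+4c(L)-5$ before you insert a single zigzag. But the closed braid genuinely has oversized regions, and their number can grow proportionally to its crossing number (hence quadratically in $c(L)$); every subdivision adds crossings, so your total acquires a leading coefficient strictly larger than $\tfrac12$. Worse, each inserted finger creates new regions bounded partly by the insertion itself, and controlling those against bigons and new oversized faces is exactly the cascading difficulty that made the braid-based bound quartic. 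You defer all of this to ``bookkeeping,'' but no bookkeeping can make the numbers close; the approach needs to be replaced, not tightened. (Also, the exclusion of the unknot and the Hopf link in the paper is simply the requirement $c(L)\geq 3$ needed for the spiral construction, not a failure of a subdivision budget.)
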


\begin{proof}
Let $L$ be a nontrivial knot or non-split link, which is not  the Hopf link.
As mention in the first two paragraphs of the proof of Theorem~\ref{thm:braid},
$G$ is a connected 4-valent plane graph with $n=c(L)$ vertices obtained from
a minimal crossing diagram of $L$,
and there exists a bisected vertex leveling of $G$
so that each $G_k$, $k=2,3, \dots, n-1$ has a type among $T^3_1$, $T^2_2$ and $T^1_3$,
while $G_1$ has $T^4_0$ type and $G_n$ has $T^0_4$ type.
We may assume that $n$ is at least 3.

From this bisected vertex leveling of $G$,
we will obtain a delta diagram of $L$ by applying the following procedure.
See Figure~\ref{fig:bvl-delta} for the resulting diagram
obtained from the bisected vertex leveling in Figure~\ref{fig:braidbvl}.
Start by restoring the diagram $D$ of $L$ by giving the under/over information on each vertex of $G$.
Take $n-1$ parallel line segments
which are parts of the lines $y=i$, $i=1,2, \dots, n-1$, and connect them in a spiral form, say $S$.

\begin{figure}[h]
\includegraphics{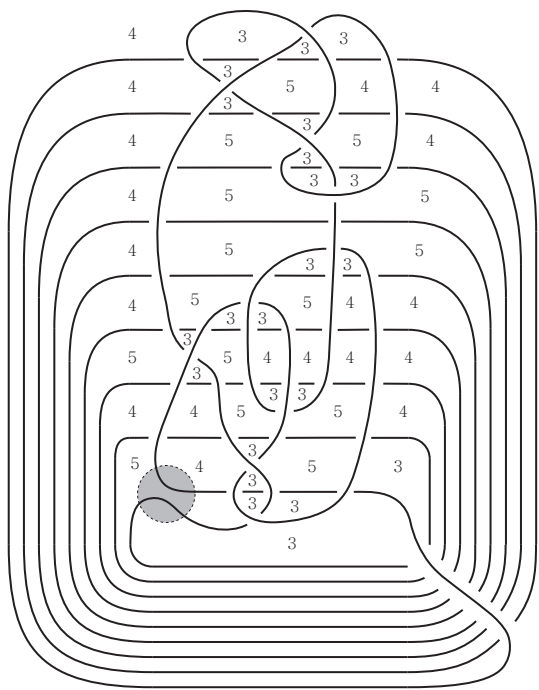}
\caption{Delta diagram obtained from a bisected vertex leveling}
\label{fig:bvl-delta}
\end{figure}

Place $S$ under $D$.
Obviously, the resulting diagram $S \cup D$ has regions with only 3, 4 or 5 sides.
In the case the vertex $v_2$ of $G_2$ is located at the leftmost side,
then we begin after rotating the diagram $D$ $180^{\circ}$ along the $y$-axis so that
$v_2$ lies at the rightmost side in $G_2$.
Now we join them near the leftmost intersection on the line $y=1$ as in the picture 
(see the shaded circle).
The result is a diagram of $L$, and it has regions with only 3, 4 or 5 sides as in Figure~\ref{fig:join}.
In the picture, we illustrate the change of the number of sides of the regions near the intersection
in two cases: $n=3$ and $n \geq 4$.

\begin{figure}[h]
\includegraphics{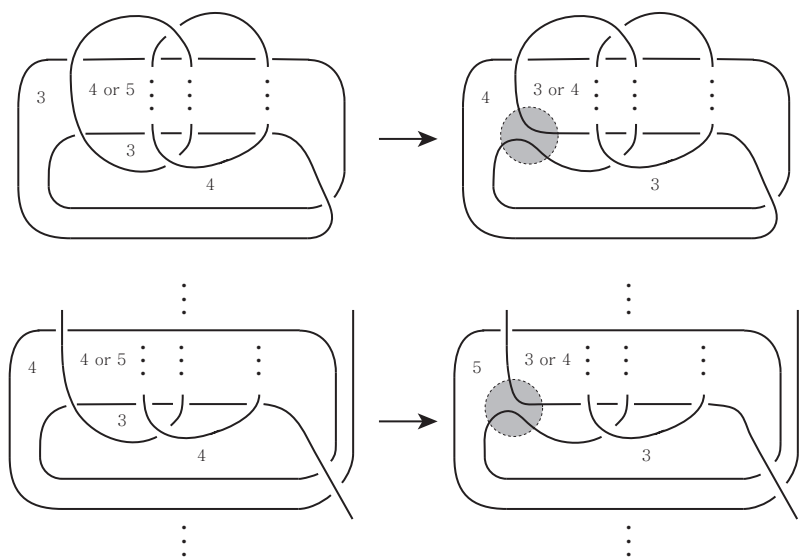}
\caption{Joining $S$ and $D$}
\label{fig:join}
\end{figure}

Finally, we calculate the number of crossings which come from the following three cases;
\begin{itemize}
\item[(1)] The diagram $D$ has $n$ self-crossings,
\item[(2)] The spiral $S$ has $n-2$ self-crossings,
\item[(3)] $D$ and $S$ meet at less than or equal to $\frac{1}{2} n^2 + 2n - 3$ crossings.
\end{itemize}
The countings of crossings in the cases (1) and (2) are obvious from the picture.
In the case (3), $D$ and $S$ meet at the most points when the lower half of $G_k$'s have $T^3_1$ type
and the upper half of $G_k$'s have $T^1_3$ type.
More precisely, if for odd $n$, $G_2, \dots, G_{\frac{n-1}{2}}$ have $T^3_1$ type,
$G_{\frac{n+1}{2}}$ has $T^2_2$ type,
and $G_{\frac{n+3}{2}}, \dots, G_{n-1}$ have $T^1_3$ type,
then they meet at $\big( 2 \sum^{\frac{n+1}{2}}_{k=2} 2k \big) -1$ points;
if for even $n$, $G_2, \dots, G_{\frac{n}{2}}$ have $T^3_1$ type
and $G_{\frac{n+2}{2}}, \dots, G_{n-1}$ have $T^1_3$ type,
then they meet at $\big( 2 \sum^{\frac{n}{2}}_{k=2} 2k \big) + (n+2) - 1$ points.
Notice that the subtraction by $1$ comes from the deletion of the point joining $D$ and $S$.
This concludes the proof.
\end{proof}

\end{document}